\newtheorem{thm}{Theorem}[section]
\newtheorem{lem}[thm]{Lemma}
\newtheorem{cor}[thm]{Corollary}
\newtheorem{prop}[thm]{Proposition}
\newtheorem{rmk}{Remark}
\numberwithin{equation}{section}
\newcommand{\bel}{\begin{equation} \label}
\newcommand{\ee}{\end{equation}}
\def\beq{\begin{equation}}
\def\eeq{\end{equation}}
\newcommand{\bea}{\begin{eqnarray}}
\newcommand{\eea}{\end{eqnarray}}
\newcommand{\beas}{\begin{eqnarray*}}
\newcommand{\eeas}{\end{eqnarray*}}
\newcommand{\pd}{\partial}
\newcommand{\dd}{\mathrm{d}}
\newcommand{\ep}{\varepsilon}
\newcommand{\la}{\lambda}
\newcommand{\proj}{\mathbb{P}}
\newcommand{\ind}{\mathds{1}}
\newcommand{\B}{\mathbb{B}}
\newcommand{\R}{\mathbb{R}}
\newcommand{\C}{\mathbb{C}} 
\newcommand{\N}{\mathbb{N}}
\newcommand{\cB}{\mathcal{B}}
\newcommand{\cF}{\mathcal{F}}
\newcommand{\cN}{\mathcal{N}}
\newcommand{\cS}{\mathcal{S}}
\newcommand{\supp}{\mathrm{supp}\,}  %supp
\def\phi {\varphi}
\def \la {{\lambda}}
\providecommand{\abs}[1]{\left\lvert#1\right\rvert}
\providecommand{\norm}[1]{\left\lVert#1\right\rVert}
\renewcommand{\leq}{\leqslant}
\renewcommand{\geq}{\geqslant}
\providecommand{\abs}[1]{\left\lvert#1\right\rvert}
\providecommand{\norm}[1]{\left\lVert#1\right\rVert}
\title[Determining an Iwatsuka Hamiltonian]{Determining an Iwatsuka Hamiltonian by knowledge of its first band function}
\author{Mourad Choulli}
\address{Universit\'e de Lorraine, France}
\email{mourad.choulli@univ-lorraine.fr}
\thanks{The first and third authors were partially supported by the Agence Nationale de la Recherche (ANR) under grant ANR-17-CE40-0029 (projet MultiOnde) while doing this work.}
\author{Nour Kerraoui}
\address{Aix-Marseille Univ, CNRS, I2M, Marseille, France}
\email{nour-el-houda.kerraoui@etu.univ-amu.fr}
\author{\'Eric Soccorsi}
\address{Aix Marseille Univ, Universit\'e de Toulon, CNRS, CPT, Marseille, France}
\email{eric.soccorsi@univ-amu.fr}
\date{}
\subjclass[2010]{35J10, 35Q40, 35R30, 81Q10}
\keywords{Inverse problem, Iwatsuka Hamiltonian, Quantum velocity, Analytically Fibered Schr\"odinger operators.}
\begin{document}

\begin{abstract}
We investigate the inverse problem of retrieving the magnetic potential of an Iwatsuka Hamiltonian through knowledge of its first band function. We prove that two magnetic potentials sharing the same first band fonction not just between them, but between all fields within the family linearly interpolating between them and accumulating at one end point, coincide. 

\end{abstract}

\maketitle

\tableofcontents

\section{Introduction: Iwatsuka Hamiltonians, edge currents and magnetic inverse problems}

Quantum Hall Hamiltonians describe the motion of a charged particle constrained to a bounded or unbounded subdomain of the plane, subject to a constant transverse magnetic 
field with strength $b \ge 0$.
Confined quantum Hall systems, such as motion in a half-plane or a strip are particularly interesting as a current flowing along an edge is created. Confinement may be obtained by Dirichlet boundary conditions (hard edge) or an electrostatic potential barrier (soft edge), but in any case the edges of the confinement induce edge currents. 
These edge currents are carried by states with energy localized between any two Landau levels $(2n-1)b$, $n \in \N :=\{ 1, 2, \ldots \}$, see e.g.
\cite{CHS,FGW, HS1, HS2}.

In the present article we are interested in edge currents created by purely magnetic barriers. Namely, we consider a two-dimensional Schr\"odinger operator with a non-constant magnetic field $b(x,y)=b(x)$, $(x,y) \in \R^2$, depending only on $x$.  When the real-valued function $b$ is bounded and has different limits as $x$ goes to $\pm \infty$, it was shown in \cite{I} by
Iwatsuka  that the spectrum is absolutely continuous. Later on, the transport properties of these so-called Iwatsuka Hamiltonians were investigated in \cite{RP} by physicists Reijniers and Peeters. When $b(x)$ assumes constant value $b_\pm$ for $\pm x > 0$, $0<b_- < b_+<\infty$, they argued that this discontinuity in the magnetic field at $x=0$ creates an effective edge and that currents flow along the edge. This is the magnetic analog of the barriers created by Dirichlet boundary conditions along $x = 0$ or a confining electrostatic potential filling the half-space $x < 0$ described in the paragraph above. 

Besides, 
when $0 < b_- \le b(x) \le b_+$ for $x \in (-\epsilon,\epsilon)$, where $ \epsilon \in [0,b_-^{-1/2})$, and
$b(x) = b_\pm$ for $\pm x > \epsilon$, the existence of currents flowing in the $y$-direction was rigorously established in \cite{HS3}. These currents are carried by states with energy concentration in the energy bands of the Iwatsuka Hamiltonian,
and they are well-localized in $x$ to a region of size
$b_-^{-1/2}$, centered at $x=0$. 
The study of the case of a jump in the magnetic field at $x = 0$ corresponding to $\epsilon = 0$ in \cite{HS3}, was extended to $b_+=-b_-=b>0$ 
in \cite{DHS}. It shows that the magnetic field creates an effective barrier near $x = 0$ that causes edge currents
to flow along it consistent with the heuristic approach conducted in \cite{RP}. Moreover, partially motivated by \cite{RP}, Dombrowski, Germinet and Raikov studied the edge 
conductance for generalized Iwatsuka 
models in \cite{DGR}.

In this article we examine the inverse problem of determining the vector potential $A(x,y)=(0,a(x))$, where $a(x)=\int_0^x b(s) \dd s$, $x \in \R$, of the Iwatsuka Hamiltonian 
defined on the dense domain $C_0^\infty(\R^2) \subset L^2(\R^2)$ by $(-i \nabla -A)^2$, from knowledge of the edge currents carried by quantum 
states with energy localized in the first band $(b_-,b_+)$. More precisely, these edge currents are generated upon triggering the dynamic quantum 
system governed by the Iwatsuka Hamiltonian, by a suitable set of initial states with energy concentration within $(b_-,b_+)$. 
This problem is closely related to the inverse spectral problem of determining the Iwatsuka Hamiltonian from knowledge of its first band function, that will be examined separately.

Inverse coefficient problems for the magnetic Schr\"odinger operator have attracted a great deal of attention over the last years. For instance, in \cite{CS},
using the Bukhgeim-Klibanov method, see \cite{BK}, the time-independent divergence-free magnetic potential was Lipschitz stably retrieved from a finite number of partial boundary observations (over the entire course of time) of the solution. The authors proceed by suitably changing the initial state of the corresponding dynamic Schr\"odinger equation and then measuring the solution on a sub-boundary fulfilling the geometric optics condition for the observability derived by Bardos, Lebeau and Rauch in \cite{BLR}. The case of non-zero divergence magnetic vectors was treated with the same approach in \cite{HKSY}.

In \cite{DKSU,S} the magnetic field of the Schr\"odinger operator was identified by the Dirichlet-to-Neumann map. These results are based on a different approach using geometric optics solutions. The stability issue in the same problem was treated in \cite{BC} but for dynamic magnetic Schr\"odinger 
equation in a bounded domain, and in \cite{B} for the same equation on a Riemannian manifold. As for the determination of the magnetic vector potential of the Schr\"odinger operator by spectral data, it was established in \cite{Ki}. All the above mentioned inverse results were derived for magnetic Schr\"odinger systems in a bounded domain, and we refer the reader to \cite{BKS} for the study of the inverse problem of retrieving the magnetic field of the Schr\"odinger 
equation in an unbounded cylindrical domain. 

It is worth noticing that the analysis of the inverse problem under study in this manuscript is fundamentally different from the ones used for solving the magnetic inverse problems of \cite{B, BC, BK, BKS, CS, DKSU, Ki, S}. This is mostly due to the fact that our data are not naturally related to the Neumann boundary data or the spectral data used by \cite{B, BC, BK, BKS, CS, DKSU, Ki, S}. 
Nevertheless, due to its translational invariance in the $y$ direction, the Iwatsuka Hamiltonian admits a fiber decomposition. The fibers are Sturm-Liouville operators with a zero-th order perturbation expressed in terms of the unknown function $a$. But despite of this, and since the fibers are defined on the real line here, it is still unclear whether the analysis conducted in \cite[Chapter 2, Section 3]{M} on the half-line could be adapted to the inverse problem under investigation in this manuscript. As for inverse spectral problems for one-dimensional Schr\"odinger operators defined either on the real-line or on the half-line, we refer the reader to \cite{GS} and the references therein, where the continuous and bounded from below real-valued electric potential was identified by 
the Krein spectral shift function.

\section{Definitions and results}
\label{sec-def}

\subsection{Iwatsuka Hamiltonians}
\label{sec-settings}

Let $b \in L^\infty(\R)$ be a non-decreasing function satisfying
\bel{a1}
\lim_{x \to \pm \infty} b(x)=b_\pm,
\ee
where $b_\pm$ are two positive real numbers such that
\bel{a2}
0 < b_-<b_+<3b_-.
\ee
Notice for further use that \eqref{a1} yields that
\bel{a2b}
b_- \le b(x) \le b_+,\ x \in \R.
\ee
Next, we put
\bel{a3}
a(x):=\int_0^x b(s) \dd s,\ x \in \R,
\ee
and we introduce the magnetic potential $A(x)=(A_1(x), A_2(x))$ with $A_1(x)=0$ and $A_2(x)=a(x)$. In the present article we consider the two-dimensional
magnetic Hamiltonian $(-i \nabla -A)^2$, defined on $C_0^\infty(\R^2)$ by
\bel{a4}
H :=-\partial_x^2 + (-i \partial_y - a)^2.
\ee
Since $A \in L_{\mathrm loc}^4(\R^2,\R^2)$ and $b=\partial_x A_2- \partial_y A_1 \in L_{\mathrm loc}^2(\R^2,\R)$, the operator $H$ is essentially self-adjoint on 
the dense domain $C_0^\infty(\R^2)  \subset L^2(\R^2)$ by \cite[Theorems 1 and 2]{LS}, and we still denote by $H$ its unique self-adjoint extension in 
$L^2(\R^2)$.

\subsection{Reduction to one-dimensional operators}

The Schr\"odinger operator defined in \eqref{a4} being invariant with respect to translations in the $y$-direction, it decomposes into 
a family of 
parameterized Hamiltonians on $L^2(\R)$. Let $\cF$ denote the partial Fourier transform with respect to $y$, i.e.
$$ (\cF u)(x,\xi) = \hat{u}(x,\xi) := (2 \pi)^{-1\slash 2} \lim_{N \to \infty} \int_{-N}^N  e^{-i\xi y} u(x,y) \dd y,\ u \in L^2(\R^2),\ (x,\xi) \in \R^2, $$
the limit being taken in the $L^2(\R)$-sense. We have $\hat{u} \in L^2(\R^2)$ and
\[
y \mapsto \hat{u}(\xi,y) \ind_{[-N,N]}(y) \in L^1(\R) \cap L^2(\R)\;  \mbox{for a.e.}\;  \xi \in \R, 
\]
where $\ind_{[-N,N]}$ stands for the characteristic function of $[-N,N]$, and we recall that the Fourier inversion formula reads 
$$ u(x,y) = (\cF^* \hat{u})(x,y) := (2 \pi)^{-1\slash 2} \lim_{N \to \infty} \int_{-N}^N e^{i\xi y} \hat{u}(x,\xi) \dd \xi,\  (x,y) \in \R^2. $$
The Hilbert space $L^2(\R^2)$ can be 
expressed as a constant fiber direct integral over $\R$ with fibers $L^2 (\R)$, i.e. $L^2(\R^2)=\int_\R^\oplus L^2(\R) \dd \xi$, and the operator $H$ admits a partial Fourier decomposition with respect to the $y$-variable, 
with
$$
\cF H \cF^* = \int_{\R}^\oplus h(\xi) \dd \xi,
$$
where each $h(\xi)$, $\xi \in \R$, is self-adjoint in $L^2(\R)$ with domain $D(h(\xi))$ which is independent of $\xi$, i.e. $D(h(\xi))=D(h(0))$, according to \cite[Lemma 2.3]{I}. Moreover, we have 
\bel{r1}
h(\xi)=-\frac{\dd^2}{\dd x^2}+q(x,\xi)\ \mbox{on}\ C_0^\infty (\R),
\ee
where $q(x,\xi):=v(x,\xi)^2 \slash 4$ and $v(x,\xi):=2(\xi-a(x))$ denotes the quantum velocity for the wave number $\xi$.  

In light of \eqref{a1}, the potential $q(\cdot, \xi)$, $\xi \in \R$, tends to infinity as $|x|$ goes to infinity, hence $h(\xi)$ has a compact resolvent. 
Let $\{ \lambda_j(\xi),\ j \in \N \}$, be the non decreasing sequence of the eigenvalues of the operator $h(\xi)$, $\xi \in \R$. 
Since all the eigenvalues $\lambda_j(\xi)$ are simple (see \cite[Proposition A2]{HS1} or \cite[Lemma 2.3]{I}), we have for all $j \ge 3$,
$$ \lambda_1(\xi) < \lambda_2(\xi) < \ldots < \lambda_j(\xi) < \lambda_{j+1}(\xi) < \ldots $$
and the functions $\xi \mapsto \lambda_j(\xi)$, $j \ge 1$, are real analytic by the Kato perturbation theory, see \cite[Chap. VII]{K}. Moreover we have
\bel{a5}
(2j-1) b_- \le \lambda_j(\xi) \le (2j-1) b_+,\ \xi \in \R,\ j \in \N,
\ee
and
\bel{a6} 
\lim_{\xi \to \pm \infty} \lambda_j(\xi) = (2j-1)b_\pm,\ j \in \N, 
\ee
from \cite[Proposition 3.1]{DGR}. As a consequence the spectrum of $H$ is purely absolutely continuous (see, e.g. \cite[Theorem XIII.86]{RS4}) and
$$
\sigma(H) = \bigcup_{j=1}^\infty [(2j-1)b_-,(2j-1)b_+].
$$
Therefore, $\sigma(H)$ has a band structure and it follows from \eqref{a2} that the first band $[b_-,b_+]$ does not overlap with the remaining part of the spectrum $\cup_{j=2}^\infty [(2j-1)b_-, (2j-1)b_+]$.

The main purpose of this article is to examine whether knowledge of the first band function $\lambda_1$ uniquely determines the magnetic field $b$ of the Iwatsuka Hamiltonian $H$.

\subsection{Quantum velocity}
In light of \cite[Lemma 2.3]{I}, there exists a $L^2(\R)$-orthonormal basis $\{ \varphi_j(\cdot,\xi),\ j \in \N \}$ of eigenfunctions of $h(\xi)$, $\xi \in \R$, such that
\bel{qv1} 
h(\xi) \phi_j(\cdot,\xi)=\lambda_j(\xi) \phi_j(\cdot,\xi),\ j \in \N.
\ee
Moreover, $\phi_j(\cdot,\xi) \in D(h(0)) = \{ u \in H^1(\R),\ -u'' + a^2 u \in L^2(\R) \}$, $j \in \N$, depends analytically on $\xi \in \R$ with respect to the graph norm of $h(0)$, defined by 
$$ \| u \|_{D(h(0))}:=\left( \| u \|_2^2 + \| h(0) u \|_2^2 \right)^{1 \slash 2},\ u \in D(h(0)), $$
where $\| \cdot \|_2$ denotes the usual norm in $L^2(\R)$. 
In what follows, all the eigenfunctions $\varphi_j(\cdot,\xi)$, $j \in \R$, are chosen to be real-valued, and since $\varphi_1(\cdot,\xi)$ {\color{red} has a constant sign}, we will always assume that
\bel{qv2} 
\varphi_1(x,\xi) >0,\ x \in \R.
\ee

This being said, we introduce the {\it current operator} $\vartheta$ as
\bel{c1} 
\vartheta(\chi) := \int_{\R} \chi(\xi)^2 \langle v(\cdot,\xi) \phi_1(\cdot,\xi), \phi_1(\cdot,\xi) \rangle_2 \dd \xi,\ \chi \in C_0^\infty(\R):=C_0^\infty(\R,\R),
\ee
where $\langle \cdot , \cdot \rangle_2$ is the usual scalar product in $L^2(\R)$.
Notice that $\vartheta$ is literally stated as a number here, which can be interpreted as a quadratic form in a second step and hence as an operator.
For this reason we keep calling $\vartheta$ an operator in the following although this is a little bit an abuse of langage. 
Further, we shall see in Section \ref{sec-tp}, that $\vartheta(\chi)$ is the expectation of the second component $2(i \pd_y+a)$ of the {\it velocity operator} $2(i \nabla+A)$ expressed in the quantum state $e^{-it H} u_{0,\chi}:=\cF^* \left( e^{-it \lambda_1} \psi_\chi \right)$, $t \in [0,+\infty)$, where
\bel{c1b} 
u_{0,\chi} := \cF^* \psi_\chi\ \mbox{and}\ \psi_\chi(x,\xi):= \chi(\xi) \phi_1(x,\xi),\ (x,\xi) \in \R^2.
\ee
{\color{red} This amounts to saying that} $\vartheta(\chi)$ is the quantum current carried by 
$e^{-it H} u_{0,\chi} $.
And since it is time-independent according to \eqref{c1}, we rather call it {\it quantum current carried by $u_{0,\chi}$} in the sequel.

In the present article we also investigate the inverse problem to know whether the Iwatsuka Hamiltonian $H$ can be retrieved from its transport properties, expressed through the current operator $\vartheta$.

\subsection{Main results and outline of the article}
\label{sec-mainres}

We denote by $\B$ the (convex) set of Iwatsuka magnetic fields, i.e. the set of non-decreasing functions $b \in L^\infty(\R)$ satisfying the condition \eqref{a1}-\eqref{a2}. Given $b$ and $\tilde{b}$ in $\B$, we introduce
$$ b_\ep:=(1-\ep) b+\ep \tilde{b},\ \ep \in (0,1]. $$
Evidently, $b_\ep \in \B$ for all $\ep \in (0,1]$ and we denote by $\{ \lambda_{j,\ep}(\xi),\ j \in \N \}$ the set of eigenvalues arranged in increasing order, of the operator
\bel{m0}
h_\ep(\xi):= -\frac{\dd^2}{\dd x^2} + q_\ep(\cdot,\xi),\ \xi \in \R,
\ee
where $q_\ep(\cdot,\xi):=v_\ep(\cdot,\xi)^2 \slash 4$, $v_\ep(x,\xi):=2 (\xi-a_\ep(x))$ and $a_\ep(x):=\int_0^x b_\ep(s) \dd s$. 

Our first identification result is as follows.

\begin{thm}
\label{thm1}
Put 
$$ r_0:= \frac{(3b_- - b_+)^{1 \slash 2}}{2b_+}. $$ 
Pick $b$ and $\tilde{b}$ in $\B$ such that
\bel{m1}
\supp (\tilde{a}-a) \subset [-r,r]
\ee
for some $r \in (0,r_0)$,
where $a$ is defined by \eqref{a3} and $\tilde{a}(x):=\int_0^x \tilde{b}(s) \dd s$ for all $x \in \R$.\\
Then, if $0$ is an accumulation point  of $\{\epsilon\in (0,1], \lambda_{1,\epsilon}(0)=\lambda_1(0)\}$, we have $\tilde{b}=b$.
\end{thm}

\begin{rmk}
As can be easily seen from the proof of this theorem, displayed in 
Section \ref{proof-thm1}, the result is actually slightly more general as the set $\{\epsilon\in (0,1], \lambda_{1,\epsilon}(0)=\lambda_1(0)\}$ may be replaced by $\{\epsilon \in (0,1], \lambda_{1,\epsilon}(\xi)=\lambda_1(\xi)\}$, where
$\xi \in \R$ is small enough.
\end{rmk}

For $\ep \in (0,1]$, we denote by $\vartheta_\ep$ the current operator associated with $b_\ep$. 
Otherwise stated, $\vartheta_\ep(\chi)$, for $\ep \in (0,1]$ and $\chi \in C_0^\infty(\R)$, is the quantum current carried by the 
state $u_{0,\ep,\chi}$, characterized by
$$\hat{u}_{0,\ep,\chi}(\cdot,\xi)=\chi(\xi) \phi_{1,\ep}(\cdot,\xi),\ \xi \in \R. $$ 
Here, $\{ \phi_{j,\ep}(\cdot,\xi),\ j \in \N \}$ is a
$L^2(\R)$-orthonormal basis of eigenfunctions of the operator $h_\ep(\xi)$, $\xi \in \R$, satisfying
$$ h_\ep(\xi) \varphi_{j,\ep}(\cdot,\xi)=\la_{j,\ep}(\xi) \varphi_{j,\ep}(\cdot,\xi),\ j \in \N. $$
As (it is fairly well known that) knowledge of the current operator $\vartheta$ yields knowledge of the first band function $\lambda_1$ (see Proposition \ref{pr1}), the following result is a byproduct of Theorem \ref{thm1}.

\begin{cor}
\label{cor1}
Under the conditions of Theorem \ref{thm1} we have $\tilde{b}=b$ whenever $0$ is an accumulation point of
$\{\epsilon \in (0,1], \vartheta_\ep = \vartheta \}$.
\end{cor}

In contrast to Theorem \ref{thm1} (resp., Corollary \ref{cor1}) where an infinite number of spectral 
data $\lambda_{1,\epsilon}(0)$ (resp., velocity data in the form of the current operators $\vartheta_\ep$), $\epsilon \in (0, 1]$, are supposed to be known, the coming result only assumes knowledge of the ground state for an arbitrary wave number. Namely, writing 
$(\tilde{\lambda}_j,\tilde{\varphi}_j)$, $j \in \N$, instead of $(\lambda_{j,1},\varphi_{j,1})$, that is to say that 
$\{ \tilde{\phi}_{j}(\cdot,\xi),\ j \in \N \}$ is a 
$L^2(\R)$-orthonormal basis of eigenfunctions of the operator 
$\tilde{h}(\xi):= h_1(\xi)$, such that
$$ \tilde{h}(\xi) \tilde{\varphi}_j(\cdot,\xi) = \tilde{\lambda}_j(\xi) \tilde{\varphi}_j(\cdot,\xi),\ j \in \N, $$
and that $\{ \tilde{\lambda}_j,\ j \in \N \}$ is the set of band functions associated with the operator 
$$\tilde{H}:=-\partial_x^2 + (-i \partial_y - \tilde{a}(x))^2, $$
we have the:
\begin{thm}
\label{thm2}
Let $b \in \B$ and $\tilde{b} \in \B$. If the condition
\bel{m6} 
\phi_1(x,\xi_0) = \tilde{\phi}_1(x, \xi_0),\ x \in \R,
\ee
holds for some $\xi_0 \in (0,\infty)$, then we have $\tilde{b}=b$.
\end{thm}

The remaining part of this article is structured as follows. In Section \ref{sec-data} we rigorously define the velocity data, i.e. the current operator $\vartheta$, used for solving the inverse problem under examination in Corollary \ref{cor1}, and we briefly comment on it. 
Finally, in Section \ref{sec-pr}, we give the proof of the main results stated in Theorems \ref{thm1} and \ref{thm2}, and in Corollary \ref{cor1}.

\section{Preliminaries: definition of the current operator $\vartheta$}
\label{sec-data}

In this section we study the transport properties of quantum devices described by the system
\bel{v1}
\left\{ \begin{array}{ll} (-i \partial_t + H) u(x,y,t)=0, & (x,y,t) \in \R^2 \times (0,\infty) \\ u(x,y,0)=u_0(x,y), & (x,y) \in \R^2, \end{array} \right.
\ee
where $H$ is the self-adjoint realization introduced in Section \ref{sec-settings} of the Iwatsuka Hamiltonian 
defined by \eqref{a4} on $C_0^\infty(\R^2)$, and $u_0$ is taken in $D(H)$, the domain of $H$. 
More precisely, we aim to relate the current operator $\vartheta$ defined in \eqref{c1b} to the second component of the quantum velocity operator 
associated with $H$.

\subsection{The forward problem: Energy concentration and fast decaying property}
\label{sec-fp}

For all  $u \in L^2(\R^2)$, we have 
$$ \hat{u}(x,\xi)=\sum_{j=1}^\infty u_j(\xi) \phi_j(x,\xi),\ (x,\xi) \in \R^2, $$
where $u_j(\xi):= \langle \hat{u}(\cdot,\xi), \phi_j(\cdot,\xi) \rangle_2$, since $\{ \phi_j(\cdot,\xi),\ j \in \N \}$, $\xi \in \R$, is an orthonormal basis of 
$L^2(\R)$. Thus, for all $\lambda \in (0,\infty)$ and all $u \in D(H)$, it holds true that
\begin{align*}
\| (\lambda + iH) u \|_{L^2(\R^2)}^2 & =  \sum_{j=1}^\infty \int_\R \abs{\lambda + i \lambda_j(\xi)}^2 \abs{u_j(\xi)}^2 \dd \xi \\
& =  \sum_{j=1}^\infty \int_\R (\lambda^2 + \lambda_j(\xi)^2) \abs{u_j(\xi)}^2 \dd \xi \\
& \ge  \lambda^2 \| u \|_{L^2(\R^2)}^2,
\end{align*}
and hence the operator $-iH$ is dissipative in $L^2(\R^2)$. Further, $\lambda+iH=i(H-i\lambda)$ being surjective since the spectrum of $H$ is
embedded in $[b_-,\infty)$, the operator $-iH$ is m-dissipative in 
$L^2(\R^2)$. Therefore, \eqref{v1}
admits a unique solution $u \in C^0([0,\infty), D(H)) \cap C^1([0,\infty),L^2(\R^2))$, which is expressed as
\bel{v1b}
u(x,y,t) = e^{-i t H} u_0(x,y),\ (x,y) \in \R^2,\ t \in [0,\infty), 
\ee
from \cite[Lemma 2.1]{CKS}, where
$$  e^{-i t H} u_0:= \cF^* \left( \sum_{j=1}^\infty e^{-i t \lambda_j} u_{0,j} \varphi_j \right) $$
and
$$ u_{0,j}(\xi):= \langle \hat{u}_0(\cdot,\xi), \phi_j(\cdot,\xi) \rangle_2,\ \xi \in \R. $$

For $\chi \in C_0^\infty(\R)$, let $u_{0,\chi}$ be the same as in \eqref{c1b}. Then we have $\hat{u}_{0,\chi} \in D(h(\xi))$ for all $\xi \in \R$, and
$$ \int_{\R} \left( \| \hat{u}_{0,\chi}(\cdot,\xi) \|_2^2 + \| h(\xi) \hat{u}_{0,\chi}(\cdot,\xi) \|_2^2 \right) \dd \xi 
= \int_{\R} \left( 1 + \lambda_1(\xi)^2 \right) \chi(\xi)^2 \dd \xi < \infty,
$$
whence $u_{0,\chi} \in D(H)$ by \cite[Section XIII.16]{RS4}. Therefore, it follows from \eqref{v1b} that
\bel{v1c} 
u_\chi(x,y,t):=e^{-it H} u_{0,\chi}(x,y),\ (x,y) \in \R^2,\ t \in [0,\infty),
\ee
is well-defined.

For further reference we shall establish that, 1) the quantum state $u_\chi(\cdot,\cdot,t)$, $t \in [0,\infty)$, has energy concentration in the first spectral band $(b_-,b_+)$, of $H$, and 2) $\partial_t^k u(\cdot,\cdot,t)$, $k=0,1$, together with its partial derivatives with respect to $y$, decay faster than any polynomials in the 
$y$-direction. For this purpose we introduce the Schwartz space $\cS_y(\R,L_x^2(\R))$ of smooth functions 
$y \mapsto f(\cdot,y)$ from 
$\R$ into $L^2(\R)$, whose derivatives are rapidly decreasing, as:
$$ \cS_y(\R,L_x^2(\R)) := 
\{ f \in C_y^\infty(\R,L_x^2(\R)),\ \forall (m,n) \in \N_0^2,\ \sup_{y \in \R} \abs{y}^m \| \partial_y^n f (\cdot,y)\|_2<\infty \}, $$
where $\N_0:=\{ 0 \} \cup \N$. Next, we recall that 
the spectral projection of $H$ associated with a Borel set $I \subset \R$, reads
\bel{v3b} 
\proj_I w(x,y)= \cF^* \left( \sum_{j=1}^\infty \ind_{\lambda_j^{-1}(I)} w_j \phi_j(x,\cdot) \right)(y),\ w \in L^2(\R^2),
\ee
where $\ind_I$ is the characteristic function of $I$ and $w_j(\xi):=\langle \hat{w}(\cdot,\xi), \phi_j(\cdot,\xi) \rangle_2$.
Then, the expected result is as follows.

\begin{lem}
\label{lm2}
Let $u_\chi$, $\chi \in C_0^\infty(\R)$, be defined by \eqref{v1c}, where $u_{0,\chi}$ is as in \eqref{c1b}. Then, we have
\bel{v3c}
u_\chi \in C^1([0,\infty),\cS_y(\R,L_x^2(\R)))
\ee
and
\bel{v3d}
\proj_{(b_-,b_+)} u_\chi(\cdot,\cdot,t) = u_\chi(\cdot,\cdot,t),\ t \in [0,\infty).
\ee
\end{lem}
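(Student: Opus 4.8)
The plan is to work entirely on the Fourier-fiber side and to read off both assertions from the explicit fiber representation of $u_\chi$. Since $\hat{u}_{0,\chi}(\cdot,\xi)=\chi(\xi)\phi_1(\cdot,\xi)$ by \eqref{c1b}, only the first mode is excited, and because $e^{-itH}$ acts in the $j$-th fiber by multiplication by $e^{-it\lambda_j(\xi)}$, the state defined in \eqref{v1c} satisfies $\hat{u}_\chi(\cdot,\xi,t)=e^{-it\lambda_1(\xi)}\chi(\xi)\phi_1(\cdot,\xi)$ for all $(\xi,t)$; in particular its only non-vanishing fiber coefficient is $e^{-it\lambda_1(\xi)}\chi(\xi)$ on the mode $j=1$, a feature I shall exploit throughout. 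The energy concentration \eqref{v3d} will be a direct consequence of this, while the Schwartz regularity \eqref{v3c} requires translating $y$-decay and $y$-smoothness into $\xi$-regularity via Plancherel.

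First I would establish \eqref{v3d}. Plugging the above coefficients into the spectral projector \eqref{v3b}, every term with $j\ge2$ drops out and the $j=1$ term is multiplied by $\ind_{\lambda_1^{-1}((b_-,b_+))}$. By \eqref{a5}-\eqref{a6} the band function $\lambda_1$ is real-analytic, satisfies $b_-\le\lambda_1\le b_+$, and is non-constant (its limits at $\pm\infty$ are $b_\mp$ with $b_-<b_+$); hence each level set $\lambda_1^{-1}(\{b_\pm\})$, being the zero set of a non-trivial real-analytic function, is discrete and so of Lebesgue measure zero. Consequently $\lambda_1(\xi)\in(b_-,b_+)$ for a.e. $\xi\in\R$, so that $\ind_{\lambda_1^{-1}((b_-,b_+))}\chi=\chi$ in $L^2(\R)$ and \eqref{v3d} follows at once. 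The band separation $b_+<3b_-$ from \eqref{a2} moreover guarantees that $(b_-,b_+)$ meets no higher band, which is what makes this the natural spectral window.

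The substantial part is \eqref{v3c}. The partial Fourier transform turns $\partial_y$ into multiplication by $i\xi$ and multiplication by $y$ into $i\partial_\xi$, so for each $(m,n)\in\N_0^2$ one has $y^m\partial_y^n u_\chi(\cdot,\cdot,t)=\cF^*[g_{m,n}(\cdot,\xi,t)]$ with $g_{m,n}(\cdot,\xi,t):=i^m\partial_\xi^m\big((i\xi)^n e^{-it\lambda_1(\xi)}\chi(\xi)\phi_1(\cdot,\xi)\big)$. As $\chi\in C_0^\infty(\R)$ its support $K$ is compact; on $K$ the scalar factors $(i\xi)^n$ and $e^{-it\lambda_1(\xi)}$ (with $\lambda_1$ real-analytic and bounded) together with all their $\xi$-derivatives are bounded, while the sole $x$-dependent factor $\xi\mapsto\phi_1(\cdot,\xi)$ is, by the analyticity recalled in Section \ref{sec-def} as a $D(h(0))$-valued (hence $L^2(\R)$-valued) map, smooth with every derivative $\partial_\xi^k\phi_1(\cdot,\xi)\in L^2(\R)$ bounded in $\|\cdot\|_2$ uniformly over $K$. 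Expanding $g_{m,n}$ by the Leibniz rule then gives $\sup_{\xi\in K}\|g_{m,n}(\cdot,\xi,t)\|_2<\infty$, whence by Plancherel $\|y^m\partial_y^n u_\chi(\cdot,\cdot,t)\|_{L^2(\R^2)}<\infty$ for every $(m,n)$. To convert these $L^2_y$-bounds into the $\sup_y$-seminorms defining $\cS_y(\R,L_x^2(\R))$, I would apply the vector-valued Sobolev embedding $H^1(\R;L^2(\R))\hookrightarrow L^\infty(\R;L^2(\R))$, which bounds $\sup_y\|y^m\partial_y^n u_\chi(\cdot,y,t)\|_2$ by $\|y^m\partial_y^n u_\chi(\cdot,\cdot,t)\|_{L^2(\R^2)}+\|\partial_y(y^m\partial_y^n u_\chi)(\cdot,\cdot,t)\|_{L^2(\R^2)}$, the last term being a finite sum of already-controlled quantities. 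Finally, since $\partial_t u_\chi$ amounts on the fiber side to multiplying the coefficient by the $K$-bounded real-analytic factor $-i\lambda_1(\xi)$, the same estimates yield $\partial_t u_\chi(\cdot,\cdot,t)\in\cS_y(\R,L_x^2(\R))$, and the continuity and differentiability of $t\mapsto u_\chi(\cdot,\cdot,t)$ in each seminorm follow by dominated convergence from the smooth, compactly $\xi$-supported integrands.

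The main obstacle I anticipate is precisely the uniform control, over $\supp\chi$, of the $L^2_x$-norms of the $\xi$-derivatives $\partial_\xi^k\phi_1(\cdot,\xi)$. This is where the stated real-analyticity of $\xi\mapsto\phi_1(\cdot,\xi)$ in the graph norm of $h(0)$ is indispensable, since it is what converts the mere smoothness of the $\xi$-dependence into the quantitative, compactly-uniform bounds needed to sum the Leibniz expansion. The remaining ingredients—the Plancherel identity and the Sobolev step passing from the $L^2_y$-norms naturally produced by Plancherel to the $\sup_y$-norms demanded by the Schwartz seminorms—are routine once this control is in place.
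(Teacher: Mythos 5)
Your proof is correct and takes essentially the same route as the paper: both read off the fiber formula $\hat u_\chi(\cdot,\xi,t)=e^{-it\lambda_1(\xi)}\chi(\xi)\phi_1(\cdot,\xi)$, obtain \eqref{v3d} by inserting it into \eqref{v3b}, and obtain \eqref{v3c} from the compact $\xi$-support of $\chi$ together with the analyticity of $\lambda_1$ and of $\xi\mapsto\phi_1(\cdot,\xi)$, the only difference being that where the paper simply invokes that $\cF$ is an automorphism of the Schwartz space, you carry out that transfer by hand via Plancherel and a vector-valued Sobolev embedding. Your almost-everywhere treatment of $\lambda_1^{-1}((b_-,b_+))$ is a harmless (indeed slightly more careful) variant of the paper's claim that this set is all of $\R$; note only the immaterial typo that \eqref{a6} gives $\lim_{\xi\to\pm\infty}\lambda_1(\xi)=b_\pm$, not $b_\mp$, which does not affect your non-constancy argument.
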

\begin{proof}
We start by proving \eqref{v3c}. To this end, we infer from \eqref{c1b} and \eqref{v1c} that 
$$\hat{u}_\chi(x,\xi,t)=e^{-i t \lambda_1(\xi)} \chi(\xi) \phi_1(x,\xi),\ (x,\xi) \in \R^2,\ t \in [0,\infty). $$  
Therefore, for a.e. $x \in \R$ and all $t \in [0,\infty)$, it holds true that $\xi \mapsto \hat{u}_\chi(x,\xi,t) \in C^\infty(\R)$.
Further, using that $\| \varphi_1(\cdot,\xi) \|_2=1$ for all $\xi \in \R$, we get for all $t \in [0,\infty)$ that
$\| \hat{u}_\chi(\cdot,\xi,t) \|_2 = \abs{\chi(\xi)}$ and that
$\| \partial_t \hat{u}_\chi(\cdot,\xi,t) \|_2 =\lambda_1(\xi)\abs{\chi(\xi)}$. As a consequence we have
$$\hat{u}_\chi \in C^1([0,\infty),\cS_\xi(\R,L_x^2(\R))),$$ 
and \eqref{v3c} follows from this since the partial 
Fourier transform $\cF$ is an automorphism of the Schwartz space $\cS(\R)$. As for \eqref{v3d}, this a straightforward consequence of 
\eqref{c1b} and \eqref{v1c}- \eqref{v3b}, because we have $\lambda_1^{-1}(b_-,b_+)=\R$ by virtue of \eqref{a5}-\eqref{a6}.
\end{proof}

\subsection{Quantum velocity}
\label{sec-tp}
Let $u$ be given by \eqref{v1b}. Assume moreover that 
$$u \in C^1([0,\infty),\cS_y(\R,L_x^2(\R))). $$
Then, the expectation of the $y$-component of the velocity operator of the system in the quantum state $u$ is (well-) defined by
$$ \upsilon(u_0,t) := \frac{\dd}{\dd t} \langle y u(\cdot,t), u(\cdot,t) \rangle_{L^2(\R^2)},\ t \in [0,\infty), $$ 
where $\langle \cdot , \cdot \rangle_{L^2(\R^2)}$ is the usual scalar product in $L^2(\R^2)$ and the notation $y$ stands for the multiplication operator by $y$. Otherwise stated, $\upsilon(u_0,t)$ is the velocity, i.e. the first 
time derivative, of the quantum realization in the state $e^{-i t H} u_0$, of the second component $y$ of the position observable. Hence 
$\upsilon(u_0,t)$ can be interpreted as the quantum current flowing in the $y$-direction, that is carried by the state $e^{-i t H} u_0$. 
We refer the reader to \cite{DGR, DHS, HS3} and the references therein, for an extensive mathematical study of the transport properties
of Iwatsuka Hamiltonians.

Further, since $u=e^{-it H} u_0  \in C^1([0,\infty),\cS_y(\R,L_x^2(\R)))$ yields that
\bel{v1e}  
H u \in C^0([0,\infty),\cS_y(\R,L_x^2(\R))),
\ee
we see that %it follows from \eqref{v1b} that
\begin{align*} 
\upsilon(u_0,t) & =  \frac{\dd }{\dd t} \langle y e^{-it H} u_0, e^{-it H} u_0 \rangle_{L^2(\R^2)}\\
& =  -i \left( \langle y H e^{-it H} u_0 , e^{-it H} u_0 \rangle_{L^2(\R^2)} - \langle y e^{-it H} u_0 , H e^{-it H} u_0 \rangle_{L^2(\R^2)} \right)\\
& =  -i \langle [y, H] e^{-it H} u_0 , e^{-it H} u_0 \rangle_{L^2(\R^2)},
\end{align*}
where $[y,H]$ denotes the commutator of $y$ with $H$. Next, using that $[y,H]=[y,(-i\partial_y-a)^2]$ and that $[y, (-i\partial_y-a)]=i$, we find that
$[y,H]=2i(-i \partial_y -a)$, and hence that
\bel{v2} 
\upsilon(u_0,t) = 2 \langle (-i \partial_y  - a) e^{-it H} u_0 , e^{-it H} u_0 \rangle_{L^2(\R^2)},\ t \in [0,\infty).
\ee
Notice from \eqref{v1e} that $(-i \partial_y  - a) e^{-it H} u_0 \in L^2(\R^2)$ and hence that the right-hand side of \eqref{v2} is well-defined, as we have
$$ \| \partial_x e^{-it H} u_0 \|_{L^2(\R^2)}^2 + \| (-i \partial_y  - a) e^{-it H} u_0 \|_{L^2(\R^2)}^2 = 
\langle H e^{-it H} u_0 , e^{-it H} u_0 \rangle_{L^2(\R^2)}. $$
Now, the transform $\cF$ being unitary in $L^2(\R^2)$, we deduce from the identity 
$$2\cF (-i \partial_y -a(x)) \cF^* = 2(\xi-a(x)) = v(x,\xi),\ (x,\xi) \in \R^2, $$
and from \eqref{v2} that
\bel{v3} 
\upsilon(u_0,t) = \langle v \cF (e^{-it H} u_0) , \cF (e^{-it H} u_0)\rangle_{L^2(\R^2)},\ t \in [0,\infty).
\ee

Let us now express $\upsilon(u_0,t)$ when $u_0 \in \proj_I (L^2(\R^2))$ for some $I \subset \R$, that is to say when $u_0=\proj_I u_0$. 
In this case, we have   
$$ \cF (e^{-it H} u_0)(x,\xi) = \sum_{j=1}^\infty \ind_{\lambda_j^{-1}(I)}(\xi) e^{-it \lambda_j(\xi)} u_{0,j}(\xi) \phi_j(x,\xi),\ (x,\xi) \in \R^2, $$
from \eqref{v3b}, where $u_{0,j}(\xi):=\langle \hat{u}_0(\cdot,\xi), \phi_j(\cdot,\xi) \rangle_2$. Putting this into \eqref{v3}, we obtain that
\begin{align}
&\upsilon(u_0,t)\label{v4}
\\
&\qquad = \sum_{j,k=1}^\infty \int_{\lambda_j^{-1}(I) \cap \lambda_k^{-1}(I)} e^{-it(\lambda_j(\xi)-\lambda_k(\xi))} u_{0,j}(\xi) \overline{u_{0,k}(\xi)}\langle v(\cdot,\xi) \phi_j(\cdot,\xi) , \phi_k(\cdot,\xi) \rangle_2 \dd \xi.\nonumber
\end{align}
Now, suppose that $I \subset (b_-,b_+)$, in such a way that we have
$$\lambda_j^{-1}(I)=\emptyset,\ j \geq 2, $$
by virtue of \eqref{a2} and \eqref{a5}. Then, it follows from \eqref{v4} that
\bel{v5}
\upsilon(u_0,t) = \int_{\lambda_1^{-1}(I)} \abs{u_{0,1}(\xi)}^2 \langle v(\cdot,\xi) \phi_1(\cdot,\xi) , \phi_1(\cdot,\xi) \rangle_2 \dd \xi .
\ee
Therefore, the quantum current $\upsilon(u_0,t)$ carried by a state $u_0$ with energy concentration in $I \subset (b_-,b_+)$, is independent of $t$. For the sake of notational simplicity, we write $\upsilon(u_0)$ instead of $\upsilon(u_0,t)$ in the following. 

Now, with reference to Lemma \ref{lm2}, we deduce from \eqref{c1b} and \eqref{v5} that $\vartheta(\chi)$ is the quantum current carried by $e^{-it H} u_{0,\chi}$, $t \in [0,\infty)$, or equivalently by $u_{0,\chi}$:
$$ \vartheta(\chi) = \upsilon(u_{0,\chi}),\ \chi \in C_0^\infty(\R). $$

\subsection{Comments on the data $\vartheta$ and the formulation of the inverse problem}
\label{sec-comment}
It might seem surprising at first sight that we probe the system \eqref{v1} with the wave number profile $\chi$ of the initial state $u_{0,\chi}$, rather than with the initial state itself. But there are at least two reasons why the current operator $\vartheta$ should not be defined as a function of the initial states $u_0$, the first one being that this would be physically irrelevant. Indeed, since any initial state of the quantum system \eqref{v1} governed by the Iwatsuka Hamiltonian $H$, 
with energy concentration between $b_-$ and $b_+$, is expressed as $\cF^* (\chi \phi_1)$ for some suitable $L^2(\R)$-function $\chi$ of the variable $\xi \in \R$, and since all the ground states
$\phi_1(\cdot,\xi)$, $\xi \in \R$, are determined by $H$, it is clear that only $\chi$ can be prescribed. 

Secondly, it turns out that from a mathematical viewpoint, the inverse problem of recovering the magnetic potential $a$ by triggering the system \eqref{v1} with a suitable set of initial states $u_0$, is pointless. This can be understood (upon using Proposition \ref{pr1} below) from the following lines. 

Given two magnetic fields 
$b$ and $\tilde{b} $ in $\mathbb B$, we aim to compare the quantum currents $\upsilon(u_0)$ and $\tilde{\upsilon}(u_0)$ induced by  \eqref{v1} associated with, respectively, $b$ and $\tilde{b}$, and endowed with a non-zero initial state $u_0 \in U$, where $U:=\proj_{(b_-,b_+)}(L^2(\R^2)) \cap \tilde{\proj}_{(b_-,b_+)}(L^2(\R^2))$. Here, $\tilde{\proj}_{(b_-,b_+)}$ denotes the spectral projection on $(b_-,b_+)$ of the Iwatsuka Hamiltonian $\tilde{H}$ obtained upon substituting $\tilde{b}$ for $b$ in \eqref{a3}-\eqref{a4}.
If such a state exists, that is to say if there exists $u_0 \in L^2(\R^2) \setminus \{ 0 \}$ such that
$$u_0=\proj_{(b_-,b_+)} u_0=\tilde{\proj}_{(b_-,b_+)} u_0, $$ 
then we have
\bel{c1c} 
\hat{u}_0(x,\xi)=u_{0,1}(\xi) \phi_1(x,\xi) =\tilde{u}_{0,1}(\xi) \tilde{\phi}_1(x,\xi),\ (x,\xi) \in \R^2, 
\ee
where $u_{0,1}(\xi):=\langle \hat{u}_0(\cdot,\xi) , \phi_1(\cdot,\xi) \rangle_2$, $\tilde{u}_{0,1}(\xi):=\langle \hat{u}_0(\cdot,\xi) , \tilde{\phi}_1(\cdot,\xi) \rangle_2$, and $\tilde{\varphi}_1(\cdot,\xi)$ is defined as in Theorem \ref{thm2}. Here, we recall that the notation $\langle \cdot, \cdot \rangle_2$ stands for the usual scalar product in $L^2(/R)$. Thus, upon squaring both sides of \eqref{c1c} and then integrating with respect to $x$ over $\R$, we get that $u_{0,1}(\xi)^2=\tilde{u}_{0,1}(\xi)^2$ for a.e. $\xi \in \R$, and hence that $\abs{u_{0,1}(\xi)}=\abs{\tilde{u}_{0,1}(\xi)}$. Therefore, the set 
$$\cN(u_0):=u_{0,1}^{-1}(\{0\})=\{\xi \in \R,\ u_{0,1}(\xi)=0 \}$$ 
can be equivalently defined as
$\cN(u_0):=\tilde{u}_{0,1}^{-1}(\{0\})=\{\xi \in \R,\ \tilde{u}_{0,1}(\xi)=0 \}$, and it holds true that
$\abs{\phi_1(\cdot,\xi)}=\abs{\tilde{\phi}_1(\cdot,\xi)}$ a.e. in $\R$, whenever $\xi \in \R \setminus \cN(u_0)$. As a consequence we have
\bel{c2}
\phi_1(x,\xi)=\tilde{\phi}_1(x,\xi),\ (x,\xi) \in \R \times (\R \setminus \cN(u_0)),
\ee
since $\phi_1(\cdot,\xi)$ and $\tilde{\phi}_1(\cdot,\xi)$ are positive functions for all $\xi \in \R$. 

Let us now assume for a while that the function
$$F(\xi) := \norm{\phi_1(\cdot,\xi)-\tilde{\phi}_1(\cdot,\xi)}_{L^2(\R)}^2,\ \xi \in \R,$$ 
is not identically zero. Since $F$ is real analytic and the Lebesgue measure of the zero set of a non-trivial real analytic function is zero, the Lebesgue measure of $\R \setminus \cN(u_0)$ should be zero, according to \eqref{c2}. This would mean that $u_{0,1}(\xi)=0$ for a.e. $\xi \in \R$, and hence that $\hat{u}_0=0$ in $L^2(\R^2)$, according to \eqref{c1c}, which is contradiction the fact that $u_0$ is non-zero.
Therefore, we have $F(\xi)=0$ for all $\xi \in \R$, and consequently $\phi_1(\cdot,\xi)=\tilde{\phi}_1(\cdot,\xi)$ in $L^2(\R)$ for all $\xi \in \R$.

Summing up, we have proved that the following equivalence holds:
\bel{c3} 
U \neq \{ 0 \} \Longleftrightarrow \forall \xi \in \R,\ \phi_1(\cdot,\xi)=\tilde{\phi}_1(\cdot,\xi)\ \mbox{in}\ L^2(\R).
\ee
Having seen this, let us suppose that $U \neq \{ 0 \}$, and assume in addition that
$$ \upsilon(u_0)=\tilde{\upsilon}(u_0),\ u_0 \in U. $$
Then, we have $\phi_1(\cdot,\xi)=\tilde{\phi}_1(\cdot,\xi)$ for all $\xi \in \R$, from \eqref{c3}, and it is clear for all $\chi \in C_0^\infty(\R)$ that
$u_0=\cF^*(\chi \phi_1) = \cF^*(\chi \tilde{\phi}_1) \in U$.  Moreover, since $\upsilon(u_0)=\vartheta(\chi)$ and $\tilde{\upsilon}(u_0)=\tilde{\vartheta}(\chi)$, where $\tilde{\vartheta}$ is the same as in Theorem \ref{thm2}, we get that
$$ \vartheta(\chi)=\tilde{\vartheta}(\chi),\ \chi \in C_0^\infty(\R). $$
Therefore, we have $\lambda_1=\tilde{\lambda}_1$ by Proposition \ref{pr1}, whence
$$ \left( (\xi-\tilde{a}(x))^2 - (\xi-a(x))^2 \right) \phi_1(x,\xi)=0,\ (x,\xi) \in \R^2. $$
Since $\phi_1(\cdot,\xi)$ is positive for all $\xi \in \R$, this entails that 
$$ (\xi-\tilde{a}(x))^2 = (\xi-a(x))^2,\ (x,\xi) \in \R^2. $$
Upon differentiating the above identity with respect to $\xi$, we get that $\xi-\tilde{a}(x) = \xi-a(x)$ for all $(x,\xi) \in \R^2$, and hence that $a=\tilde{a}$ in $\R$.

\section{Analysis of the inverse problem}
\label{sec-pr}
We start with a technical result needed by the proof of Corollary \ref{cor1}.

\subsection{Proof of Corollary \ref{cor1}}
We aim to establish that knowledge of the current operator uniquely determines the first band function. With reference to the notations of Section \ref{sec-mainres}, the corresponding result can be stated as follows.

\begin{prop}
\label{pr1}
Let $b$ and $\tilde{b}$ be in $\mathbb{B}$. Assume that $\vartheta(\chi)=\tilde{\vartheta}(\chi)$ for all $\chi \in C_0^\infty(\R)$.
Then it holds true that 
$$ \lambda_1(\xi)=\tilde{\lambda}_1(\xi),\ \xi \in \R. $$
\end{prop}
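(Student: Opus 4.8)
The plan is to reduce the identity $\vartheta=\tilde\vartheta$ to a Hellmann--Feynman relation between the integrand of the current operator and the derivative of the first band function. First I would set, for $\xi\in\R$,
\[
J(\xi):=\langle v(\cdot,\xi)\phi_1(\cdot,\xi),\phi_1(\cdot,\xi)\rangle_2,
\]
which is finite since $\langle q(\cdot,\xi)\phi_1(\cdot,\xi),\phi_1(\cdot,\xi)\rangle_2\le\lambda_1(\xi)<\infty$ forces $(\xi-a)\phi_1(\cdot,\xi)\in L^2(\R)$, and similarly $\tilde J(\xi)$ built from $\tilde v,\tilde\phi_1$. Then \eqref{c1} reads $\vartheta(\chi)=\int_\R\chi(\xi)^2 J(\xi)\,\dd\xi$, so \eqref{m5} becomes $\int_\R\chi(\xi)^2\bigl(J(\xi)-\tilde J(\xi)\bigr)\,\dd\xi=0$ for all $\chi\in C_0^\infty(\R)$. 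Since $\xi\mapsto J(\xi)-\tilde J(\xi)$ is continuous (indeed real analytic, as both $\lambda_1$ and the spectral projection onto $\phi_1$ depend analytically on $\xi$), a bump-function argument finishes this step: if $(J-\tilde J)(\xi_0)\neq0$ it keeps a constant sign on a neighbourhood of $\xi_0$, and testing against a nonnegative $\chi\in C_0^\infty$ supported there produces a nonzero integral, a contradiction. Hence $J(\xi)=\tilde J(\xi)$ for all $\xi\in\R$.

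Next I would identify $J$ with $\lambda_1'$. Writing $h(\xi)=-\dd^2/\dd x^2+(\xi-a(x))^2$ one has $\partial_\xi h(\xi)=2(\xi-a(x))=v(\cdot,\xi)$, and, since $\xi\mapsto\lambda_1(\xi)$ and $\xi\mapsto\phi_1(\cdot,\xi)$ are real analytic (the latter in the graph norm of $h(0)$, as recalled after \eqref{m0}), I would differentiate $\lambda_1(\xi)=\langle h(\xi)\phi_1(\cdot,\xi),\phi_1(\cdot,\xi)\rangle_2$ in $\xi$. Using the self-adjointness of $h(\xi)$, the eigenvalue equation, and $\|\phi_1(\cdot,\xi)\|_2^2\equiv1$, the two terms carrying $\partial_\xi\phi_1(\cdot,\xi)$ cancel and only the Hellmann--Feynman term survives, giving
\[
\lambda_1'(\xi)=\langle v(\cdot,\xi)\phi_1(\cdot,\xi),\phi_1(\cdot,\xi)\rangle_2=J(\xi),\ \xi\in\R,
\]
and likewise $\tilde\lambda_1'=\tilde J$. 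Combined with the first step, $\lambda_1'\equiv\tilde\lambda_1'$ on $\R$, so $\lambda_1-\tilde\lambda_1$ is a constant. I would then identify this constant by letting $\xi\to\pm\infty$ and invoking the band asymptotics \eqref{a6} with $j=1$: the limits give $\lambda_1-\tilde\lambda_1=b_\pm-\tilde b_\pm$, and since the two fields share the same asymptotic values $b_\pm=\tilde b_\pm$, the constant vanishes and $\lambda_1\equiv\tilde\lambda_1$.

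The main obstacle is the rigorous justification of the Hellmann--Feynman step: one must differentiate the eigenvalue and pass $\partial_\xi$ through the $L^2(\R)$-pairings, which requires controlling $\partial_\xi\phi_1(\cdot,\xi)$ and $v(\cdot,\xi)\phi_1(\cdot,\xi)$ simultaneously in $L^2(\R)$, locally uniformly in $\xi$. This is precisely where the analytic dependence of $\phi_1(\cdot,\xi)$ on $\xi$ in the graph norm of $h(0)$ — available from \cite[Lemma 2.3]{I} and \cite[Chap.\ VII]{K} as recalled in Section \ref{sec-settings} — together with the fast decay of the ground state (which makes $(\xi-a)\phi_1(\cdot,\xi)$ square-integrable) enters; the cancellation then reduces everything to the single surviving term. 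By comparison, the density step and the identification of the additive constant through the limits \eqref{a6} are routine once the analyticity of $\xi\mapsto J(\xi)$ and the coincidence of the asymptotic magnetic values are in hand.
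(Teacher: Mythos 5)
Your proposal is correct and follows essentially the same route as the paper: both reduce \eqref{m5}, via the Feynman--Hellmann identity $\langle v(\cdot,\xi)\phi_1(\cdot,\xi),\phi_1(\cdot,\xi)\rangle_2=\lambda_1'(\xi)$, to the vanishing of $\int_\R \chi(\xi)^2(\lambda_1-\tilde{\lambda}_1)'(\xi)\,\dd\xi$ for all $\chi\in C_0^\infty(\R)$, then use a bump-function/sign argument with continuity to get $(\lambda_1-\tilde{\lambda}_1)'\equiv 0$, and finally kill the resulting additive constant by the band asymptotics \eqref{a6}. The only difference is the order of the two middle steps (you localize first and invoke Feynman--Hellmann second, the paper does the reverse), which is immaterial.
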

\begin{proof}
Applying \eqref{v5} with $I=(b_-,b_+)$ and $u_{0,1}=\chi$, we obtain that
$$ \vartheta(\chi) = \int_{\R} \chi(\xi)^2 \langle v(\cdot,\xi) \phi_1(\cdot,\xi) , \phi_1(\cdot,\xi) \rangle_2 \dd \xi, $$
from \eqref{a5}-\eqref{a6}. Further, since $v(\cdot,\xi)=2(\xi-a)$ is the (formal) derivative of $h(\xi)$ with respect to $\xi$, we have
$$ \langle v(\cdot,\xi) \phi_1(\cdot,\xi) , \phi_1(\cdot,\xi) \rangle_2= \lambda_1'(\xi),\ \xi \in \R, $$
by the Feynman-Hellmann theorem 
(see, e.g. \cite[Chapter VII, Problem 4.19]{K}), and hence
$$ \vartheta(\chi)  = \int_{\R} \chi(\xi)^2 \lambda_1'(\xi) \dd \xi.$$
Similarly, we have $\tilde{\vartheta}(\chi)  = \int_{\R} \chi(\xi)^2 \tilde{\lambda}_1'(\xi) \dd \xi$, and consequently
\bel{p1}
\int_{\R} (\lambda_1-\tilde{\lambda}_1)'(\xi) \chi(\xi)^2 \dd \xi =0,\ \chi \in C_0^\infty(\R), 
\ee
directly from the assumption. 

Having seen this, we will prove by contradiction that $(\lambda_1-\tilde{\lambda}_1)'$ is identically zero in $\R$. For this purpose we
assume existence of $\xi_0 \in \R$ such that $(\lambda_1-\tilde{\lambda}_1)'(\xi_0) \neq 0$. Since $\lambda_1$ and $\tilde{\lambda}_1$ play symmetric roles here, we may assume without loss of generality that $\delta:=(\lambda_1-\tilde{\lambda}_1)'(\xi_0) > 0$. Thus, by continuity of $\xi \mapsto (\lambda_1-\tilde{\lambda}_1)'(\xi)$ at $\xi_0$, there exists $\epsilon>0$ such that $(\lambda_1-\tilde{\lambda}_1)'(\xi)\ge \delta \slash 2$ whenever 
$\abs{\xi-\xi_0} \le \varepsilon$. As a consequence we have
$$ \int_{\R} (\lambda_1-\tilde{\lambda}_1)'(\xi) \chi^2(\xi) \dd \xi \ge \frac{\delta}{2} \int_\R \chi(\xi)^2 \dd \xi >0, $$
for all $\chi \in C_0^\infty(\R) \setminus \{ 0 \}$ that is supported in $(\xi_0-\varepsilon,\xi_0+\varepsilon)$. This contradicts \eqref{p1} and shows that 
$(\lambda_1-\tilde{\lambda}_1)'(\xi)=0$ for all $\xi \in \R$. Therefore, there exists $C \in \R$ such that
$$ \lambda_1(\xi)=\tilde{\lambda}_1(\xi)+C,\ \xi \in \R. $$
Now, since $\lambda_1$ and $\tilde{\lambda}_1$ fulfill \eqref{a6}, we get that $C=0$ upon 
sending $\xi$ to infinity in the above identity, and the result follows.
\end{proof}
In view of Proposition \ref{pr1}, the statement of Corollary \ref{cor1} follows directly from Theorem \ref{thm1}.

\subsection{Proof of Theorem \ref{thm1}}
\label{proof-thm1}
The proof being quite lengthy, we split it into six steps.

\noindent{\it Step 1: Spectral projections.}
Let us denote by $r_\xi$, $\xi \in \R$, the resolvent operator of $h(\xi)$, i.e., 
$$r_\xi(z):=(h(\xi)-z)^{-1},\ z \in \C \setminus \{ \lambda_j(\xi),\ j \in \N \}. $$ 
Then, the spectral projection of $h(\xi)$ associated with $\lambda_1(\xi)$ can be expressed as
\bel{ap3}
p_1(\xi)=-\frac{1}{2 i \pi} \int_{C(\lambda_1(\xi),\rho)} r_\xi(z) \dd z, 
\ee
where $\rho$ is arbitrarily fixed in $(0, 3b_- -b_+)$ and $C(\lambda_1(\xi),\rho):=\{ \lambda_1(\xi)+ \rho e^{i \theta},\ \theta \in [0, 2 \pi) \}$ is the circle centered at $\lambda_1(\xi)$ with radius $\rho$, oriented counterclockwise, see e.g., \cite[Section VII.3, Eq. 1.3]{K}. With reference to \eqref{m1} and the notations introduced in the following line, we have $q_\ep(x,\xi)=q(x,\xi)+ \ell_\ep(x,\xi)$ for all $(x,\xi) \in \R^2$, where
\bel{ap3b} 
\ell_\ep(x,\xi):= \ep \omega(x,\xi) + \ep^2 w(x)^2,\ \omega(x,\xi):= -v(x,\xi) w(x)\ \mbox{and}\ w(x):=\tilde{a}(x)-a(x).
\ee
Thus, putting $\delta:= \norm{\tilde{a}-a}_{L^\infty(\R)}$ and $M(\xi):=\norm{v(\cdot,\xi)}_{L^\infty(K)} = 2 \norm{\xi-a}_{L^\infty(K)}<\infty$, where $K:=[-r,r]$, we infer from \eqref{ap3b} that 
\bel{ap3c}
\norm{\ell_\ep(\cdot,\xi)}_{L^\infty(\R)}\le \ep C(\xi),\ C(\xi):=\delta (M(\xi)+\delta),\ \ep \in (0,1),\ \xi \in \R.
\ee
From this and the MinMax principle, it then follows that
\bel{ap3d}
\abs{\lambda_1(\xi)-\lambda_{1,\ep}(\xi)} \leq \ep C(\xi),\ \ep \in (0,1),\ \xi \in \R,
\ee
where $\lambda_{1,\ep}(\xi)$ denotes the first eigenvalue of the operator $h_\ep(\xi)$. 
Therefore, writing $\lambda_{j,\ep}(\xi)$ for the $j$-th eigenvalue, $j \in \N$, of $h_\ep(\xi)$ and taking $\ep \in (0,1)$ so small that $\ep C(\xi) < 3b_- - b_+ - \rho$, we deduce from \eqref{ap3d} that
\bel{ap3e}
\overline{D}(\lambda_1(\xi),\rho) \cap \{ \lambda_{j,\ep}(\xi), j \in \N \} = \{ \lambda_{1,\ep}(\xi) \},
\ee
where $\overline{D}(\lambda_1(\xi),\rho):=\{ z \in \C, \abs{z -\lambda_1(\xi)} \leq \rho \}$.

Set $r_{\ep,\xi}(z):=(h_\ep(\xi)-z)^{-1}$ for all $z \in \C \setminus \{ \lambda_{j,\ep}(\xi), j \in \N \}$, and put
$\ep_*:=\ep_*(\xi,\rho) = \min(1, C(\xi)^{-1} (3b_- - b_+ - \rho))$. Then, with reference to \eqref{ap3e} and 
the path independence of contour integration of the meromorphic function $z \mapsto r_{\ep,\xi}(z)$ around $\lambda_{1,\ep}(\xi)$, the spectral projection of $h_\ep(\xi)$ associated with
$\lambda_{1,\ep}(\xi)$, 
$$p_{1,\ep}(\xi)=-\frac{1}{2 i \pi} \int_{C(\lambda_{1,\ep}(\xi),\rho)} r_{\ep,\xi}(z) \dd z,\ %r_{\ep,\xi}(z):=(h_\ep(\xi)-z)^{-1}, 
$$
can be equivalently rewritten as
\bel{ap5}
p_{1,\ep}(\xi)=-\frac{1}{2 i \pi} \int_{C(\lambda_1(\xi),\rho)} r_{\ep,\xi}(z) \dd z,\ \xi \in \R,\ \ep \in (0,\ep_*).
\ee

\noindent {\it Step 2: Relating $\lambda_{1,\ep}$ to $\lambda_1$.}
Having established \eqref{ap5}, we turn now to relating $\lambda_{1,\ep}(\xi)$ to $\lambda_1(\xi)$ with the aid the identity $h_{\ep}(\xi)=h(\xi)+\ell_\ep(\cdot,\xi)$.
To do that, we start from the eigenvalue equality $h_{\ep}(\xi) p_{1,\ep}(\xi) \phi_1(\cdot,\xi)=\lambda_{1,\ep}(\xi) p_{1,\ep}(\xi) \phi_1(\cdot,\xi)$, recall  
that the operators $h(\xi)$ and $\ell_\ep(\cdot,\xi)$ are self-adjoint in $L^2(\R)$, and obtain for all $\xi \in \R$ and all $\ep \in (0,1)$, that 
\begin{align*}
& \lambda_{1,\ep}(\xi)  \langle p_{1,\ep}(\xi) \phi_1(\cdot,\xi), \phi_1(\cdot,\xi) \rangle_2 \\
&\hskip 2cm= \langle h_{\ep}(\xi) p_{1,\ep}(\xi) \phi_1(\cdot,\xi) ,\phi_1(\cdot,\xi) \rangle_2\\
&\hskip 2cm =  \langle h(\xi) p_{1,\ep}(\xi) \phi_1(\cdot,\xi) ,\phi_1(\cdot,\xi) \rangle_2+\langle \ell_\ep(\xi) p_{1,\ep}(\xi) \phi_1(\cdot,\xi) ,\phi_1(\cdot,\xi) \rangle_2 \\
&\hskip 2cm =  \langle p_{1,\ep}(\xi) \phi_1(\cdot,\xi) , h(\xi) \phi_1(\cdot,\xi) \rangle_2+ \langle  p_{1,\ep}(\xi) \phi_1(\cdot,\xi) , \ell_\ep(\xi)\phi_1(\cdot,\xi) \rangle_2 \\
&\hskip 2cm =  \lambda_1(\xi) \langle p_{1,\ep}(\xi) \phi_1(\cdot,\xi) ,  \phi_1(\cdot,\xi) \rangle_2+ \langle  p_{1,\ep}(\xi) \phi_1(\cdot,\xi) , \ell_\ep(\xi)\phi_1(\cdot,\xi) \rangle_2.
\end{align*}
Therefore, for all $\xi \in \R$ and all $\ep \in (0,1)$, we have %, for all , it holds true that
\begin{align}
F_\xi(\ep) :&=  \langle  p_{1,\ep}(\xi) \phi_1(\cdot,\xi) , \ell_\ep(\xi)\phi_1(\cdot,\xi) \rangle_2 \label{ap5b} \\
& =  (\lambda_{1,\ep}(\xi) - \lambda_1(\xi)) \langle p_{1,\ep}(\xi) \phi_1(\cdot,\xi), \phi_1(\cdot,\xi) \rangle_2.\nonumber
\end{align}

\noindent{\it Step 3: Resolvent formula.} 
With reference to the second resolvent formula, we have
\bel{ap7} 
r_{\ep,\xi}(z)=r_\xi(z)-r_\xi(z) \ell_\ep(\cdot,\xi) r_{\ep,\xi}(z),\ \ep \in (0,\ep_*),\ \xi \in \R,\ z \in C(\lambda_1(\xi),\rho).
\ee
Notice that for all $\xi \in \R$ and all $z \in C(\lambda_1(\xi),\rho)$, we have
$$
\mathrm{dist} \left( z,\{\lambda_j(\xi),\ j \in \N \} \right)= \min (\abs{\lambda_1(\xi)-z}, \abs{\lambda_2(\xi)-z}) \\
\ge \min(\rho,3b_- - b_+-\rho),$$
whence
\begin{align*}
\norm{\ell_\ep(\cdot,\xi) r_\xi(z)}_{\cB(L^2(\R))}&  \le  \norm{\ell_\ep(\cdot,\xi)}_{L^\infty(\R)} \norm{r_\xi(z)}_{\cB(L^2(\R))} \\
& \le   \frac{\ep C(\xi)}{\min(\rho,3b_- - b_+-\rho)} ,
\end{align*}
from \eqref{ap3c}, where $\cB(L^2(\R)$ denotes the space of linear bounded operators in $L^2(\R)$. Therefore, we get that $\norm{\ell_\ep(\cdot,\xi) r_\xi(z)}_{\cB(L^2(\R))}<1$ for all $\xi \in \R$ and all $z \in C(\lambda_1(\xi),\rho)$, provided that 
$$ \ep \in (0,\ep_\star),\ \ep_\star=\ep_\star(\xi,\rho) := \min(\ep_*, C(\xi)^{-1}\rho). $$
Thus, by iterating \eqref{ap7} we get for all $\xi \in \R$ and all $z \in C(\lambda_1(\xi),\rho)$, that
$$ r_{\ep,\xi}(z)= \sum_{n=0}^\infty (-1)^n r_\xi(z) \left( \ell_\ep(\cdot,\xi) r_\xi(z) \right)^n,\ \ep \in (0,\ep_\star), $$
where the series converges in $\cB(L^2(\R))$. In view of \eqref{ap3b}, this leads to
\bel{ap8}
r_{\ep,\xi}(z) 
=\sum_{n=0}^\infty (-1)^n \theta_{n,\xi}(z) \ep^n ,\ \xi \in \R,\ \ep \in (0,\ep_\star),
\ee
the series being convergent in $\cB(L^2(\R))$, uniformly in $z \in C(\lambda_1(\xi),\rho)$. 
Here, each $\theta_{n,\xi}(z) \in \cB(L^2(\R))$, $n \in \N \cup \{0 \}$, can be expressed in terms of $r_\xi(z)$, $\omega(\cdot,\xi)$ and $w$ only. For instance, we get through elementary computations that 
\begin{align}
\theta_{0,\xi}(z)  & =   r_\xi(z), \label{ap4-a} \\
\theta_{1,\xi}(z)  & =   r_\xi(z) \omega(\cdot,\xi) r_\xi(z), \label{ap4-b} \\
\theta_{2,\xi}(z) & =   r_\xi(z) ( w^2 r_\xi(z)- (\omega(\cdot,\xi) r_\xi(z))^2).
\label{ap4-c}
\end{align}
\noindent{\it Step 4: Analytic expansion of $F_\xi$.}
By inserting \eqref{ap8} into \eqref{ap5}, we obtain with the aid of \eqref{ap3} that
\begin{align*}
p_{1,\ep}(\xi) & =  \sum_{n=0}^\infty \frac{(-1)^{n+1}}{2i \pi} \left( \int_{C(\lambda_1(\xi),\rho)} \theta_{n,\xi}(z) \dd z \right) \ep^n \\
& =  p_1(\xi) +  \sum_{n=1}^\infty \frac{(-1)^{n+1}}{2i \pi} \left( \int_{C(\lambda_1(\xi),\rho)} \theta_{n,\xi}(z) \dd z \right) \ep^n,\ \xi \in\R,\ \ep \in (0,\ep_\star).
\end{align*}
We recall from \eqref{ap5b} that $\ep \mapsto F_\xi(\ep)$ is defined as the $L^2(\R)$-scalar product of $p_{1,\ep}(\xi) \varphi_1(\cdot,\xi)$ with $\ell_\ep(\cdot,\xi) \varphi_1(\cdot,\xi)$. Moreover, since $\ell_\ep(\cdot,\xi)$ is a polynomial function in $\ep$ according to \eqref{ap3b}, it follows readily from the above equality that $F_\xi(\ep)$ can be brought into the form
\bel{ap8b} 
F_\xi(\ep)=\sum_{n=1}^\infty A_n(\xi) \ep^n,\ \xi \in \R,\ \ep \in (0,\ep_\star), 
\ee
where the series is convergent in $(0,\ep_\star)$, uniformly in $\xi \in \R$, and each $A_n : \R \to \C$, $n \in \N$, is independent of $\ep$. As a matter of fact, in the special case when $n=2$, we find with the aid of \eqref{ap4-a} --\eqref{ap4-c} by elementary calculation, that for all $\xi \in \R$,
\begin{align}
A_2(\xi) & =  \langle p_1(\xi) \phi_1(\cdot,\xi) , w^2 \phi_1(\cdot,\xi) \rangle_2 \label{ap8c}
\\
& \qquad + \frac{1}{2i \pi} \int_{C(\lambda_1(\xi),\rho)} \langle r_\xi(z) \omega(\cdot,\xi) r_\xi(z) \phi_1(\cdot,\xi)  , \omega(\cdot,\xi) \phi_1(\cdot,\xi) \rangle_2 \dd z \nonumber \\
& =  \| w \phi_1(\cdot,\xi) \|_2^2 + \frac{1}{2i \pi} \int_{C(\lambda_1(\xi),\rho)}  \langle ( \omega(\cdot,\xi) r_\xi(z) )^2 \phi_1(\cdot,\xi)  , \phi_1(\cdot,\xi) \rangle_2 \dd z. \nonumber
\end{align}
Evidently, for all $\xi \in \R$ fixed, the sum on the right-hand side of \eqref{ap8b} extends to $\ep \in (-\ep_\star,\ep_\star)$ since the convergence radius of the series is at least $\ep_\star$. Hence, each $F_\xi$ can be extended to a real-analytic function at 
$\ep=0$. Moreover, $\ep=0$ is, by assumption, an accumulation point of the zeros of the function
$\ep \mapsto \la_{1,\ep}(\xi)-\la(\xi)$, hence the same is true for $\ep \mapsto F_\xi(\ep)$ from \eqref{ap5b}. Therefore, the function $F_\xi$ is necessarily identically zero by the 
principle of isolated zeros, and we have
\bel{ap10}
A_n(\xi)=0,\ \xi \in \R,\ n \in \N,
\ee
according to \eqref{ap8b}.

\noindent {\it Step 5: Computation of $A_2$.}
With reference to \eqref{ap8c} we start by decomposing $\omega(\cdot,\xi) r_\xi(z) \phi_1(\cdot,\xi)$, $\xi \in \R$, on the $L^2(\R)$-orthonormal basis $\{ \phi_j(\cdot,\xi),\ j \geq 1 \}$. We get that
$$ \omega(\cdot,\xi) r_\xi(z) \phi_1(\cdot,\xi) = \frac{\omega(\cdot,\xi)}{\lambda_1(\xi)-z} \phi_1(\cdot,\xi)= \frac{1}{\lambda_1(\xi)-z}\sum_{j=1}^\infty \omega_j(\xi) \phi_j(\cdot,\xi), $$
where $\omega_j(\xi):=\langle \omega(\cdot,\xi) \phi_1(\cdot,\xi) , \phi_j(\cdot,\xi) \rangle_2$. Therefore, we have
$$ \langle (\omega(\cdot,\xi) r_\xi(z))^2 \phi_1(\cdot,\xi) , \phi_1(\cdot,\xi) \rangle_2
=\frac{1}{\lambda_1(\xi)-z} \sum_{j =1}^\infty \frac{\abs{\omega_j(\xi)}^2}{\lambda_j(\xi)-z}, $$
and \eqref{ap8c} then yields that
\begin{align}
A_2(\xi) & =   \| w \phi_1(\cdot,\xi) \|_2^2 + \sum_{j =1}^\infty \frac{\abs{\omega_j(\xi)}^2}{2 i \pi} \int_{C(\lambda_1(\xi),\rho)} \frac{\dd z}{(\lambda_1(\xi)-z)(\lambda_j(\xi)-z)} \label{ap11}  \\
& =    \| w \phi_1(\cdot,\xi) \|_2^2 -\sum_{j =2}^\infty \frac{\abs{\omega_j(\xi)}^2}{\lambda_j(\xi)-\lambda_1(\xi)},\ \xi \in \R, \nonumber
\end{align}
by straightforward computation.
The next step is to make the second term on the right-hand side of \eqref{ap11} sufficiently small relative to $\| w \phi_1(\cdot,\xi) \|_2^2$ by choosing $\xi$ suitably in $\R$.

\noindent{\it Step 6: End of the proof.} 
We refer to \eqref{m1} and notice that there exists $\kappa \in (0,1)$ such that
$$ 
r <r(\kappa) := (1-\kappa)^{1 \slash2}\frac{(3b_- - b_+)^{1 \slash2}}{2b_+}.
$$
Further, since $\abs{a(x)} \le b_+ \abs{x}$ for all $x \in \R$, by \eqref{a2b}-\eqref{a3}, we get that
$$ \abs{v(x,\xi)} \le 2(\abs{\xi} + b_+ r),\ x \in K,\ \xi \in \R, $$
and hence that
\bel{ap13}
\| v(\cdot,\xi) \|_{L^\infty(K)} \le (1 -\kappa)^{1 \slash 2} (3b_- - b_+)^{1 \slash2},\ \xi \in  [-\xi(\kappa),\xi(\kappa)],
\ee
where $\xi(\kappa):=b_+ (r(\kappa) - r)$.
Moreover, we see from \eqref{a5} that
$$\lambda_j(\xi)-\lambda_1(\xi) \ge (2j-1) b_- - b_+ \ge 3b_- - b_+,\ \xi \in \R,\ j \ge 2, $$
and consequently 
$$
\sum_{j=2}^\infty \frac{\abs{\omega_j(\xi)}^2}{\lambda_j(\xi)-\lambda_1(\xi)} \le \sum_{j=2}^\infty \frac{\abs{\omega_j(\xi)}^2}{3b_- - b_+} 
\le \frac{\| \omega(\cdot,\xi) \phi_1(\cdot,\xi) \|_2^2}{3b_- - b_+},
$$
by the Plancherel formula.
It follows from this and the identity $\omega(\cdot,\xi)=-v(\cdot,\xi) w$ for all $\xi \in \R$, that 
$$ \sum_{j= 2}^\infty \frac{\abs{\omega_j(\xi)}^2}{\lambda_j(\xi)-\lambda_1(\xi)} \le \frac{\| v(\cdot,\xi) \|_{L^\infty(K)}^2}{3 b_- - b_+} \| w \phi_1(\cdot,\xi) \|_2^2,\ \xi \in \R. $$
Thus, taking $\xi \in [-\xi(\kappa),\xi(\kappa)]$ in the above line, we deduce from \eqref{ap13} that
$$ 
\sum_{j=2}^\infty \frac{\abs{\omega_j(\xi)}^2}{\lambda_j(\xi)-\lambda_1(\xi)} \le (1-\kappa) \| w \phi_1(\cdot,\xi) \|_2^2. $$
Therefore, we get from \eqref{ap11} that
$$ A_2(\xi) \ge \kappa \| w \phi_1(\cdot,\xi) \|_2^2,\ \xi \in [-\xi(\kappa),\xi(\kappa)],$$
and then from \eqref{ap10} that $w \phi_1(\cdot,\xi)=0$ in $L^2(\R)$.  Finally, 
taking $\xi=0$ (which is permitted since it lies in the interval $[-\xi(\kappa),\xi(\kappa)]$) and bearing in mind that $\phi_1(x,0) >0$ for a.e. $x \in \R$,  we obtain that $w=0$ a.e. in $\R$ and hence the desired result.

\subsection{Proof of Theorem \ref{thm2}}
Let us first notice that the ground state determines the potential up to an additive constant. More precisely, we have
$$ q(x,\xi_0)=\lambda_1(\xi_0)+\frac{\varphi_1''(x,\xi_0)}{\varphi_1(x,\xi_0)},\ x \in \R, $$
from \eqref{r1} and \eqref{qv1}, 
because $\varphi_1(\xi_0)$ nowhere vanishes according to \eqref{qv2}. 

In the context of Theorem \ref{thm2}, we have $\varphi_1(\cdot,\xi_0)=\tilde{\varphi}_1(\cdot,\xi_0)$, which entails that
$$ (\xi_0-a(x))^2-(\xi_0-\tilde{a}(x))^2=C,\ x \in \R, $$
where $C:=-(\lambda_1(\xi_0)-\tilde{\lambda}_1(\xi_0))$ is seen to vanish by setting $x=0$. Hence
$$  \xi_0-a(x)=\pm(\xi_0-\tilde{a}(x)),\ x \in \R, $$
where the sign $\pm$ could in principle depend on $x$. We show momentarily that it is $+$ throughout, thus implying $a=\tilde{a}$ and hence the conclusion.

For suppose that the sign is $-$ at $x_0 \in \R$ and that it is so without choice. Then $\xi_0-a(x_0) \neq 0$ and hence the sign of
$\xi_0-a(x)<0$ in a neighborhood of $x_0$. But that contradicts the fact that $a$ and $\tilde{a}$ are increasing.

\end{document}